\DeclareRobustCommand{\&}{%
  \ifdim\fontdimen1\font>0pt
    \textsl{\symbol{`\&}}%
  \else
    \symbol{`\&}%
  \fi
}
\begin{document}

\begin{center}
{\Large \bf EUCLIDEAN TOURS IN FAIRY CHESS}
\vspace{8mm}

{\large Gabriele Di Pietro and Marco Rip\`a}
\end{center}
\vspace{14mm}

\begin{adjustwidth}{1.5 cm}{1.5 cm}
{\bf Abstract.} The present paper aims to extend the knight's tour problem for $k$-dimensional grids of the form $\{0,1\}^k$ to other fairy chess leapers. Accordingly, we constructively show the existence of closed tours in $2 \times 2 \times \cdots \times 2$ ($k$ times) chessboards concerning the wazir, the threeleaper, and the zebra, for all $k \geq 15$. Our result considers the three above-mentioned leapers and replicates for each of them the recent discovery of Euclidean knight's tours for the same set of $2 \times 2 \times \cdots \times 2$ grids, opening a new research path on the topic by studying different fairy chess leapers that perform jumps of fixed Euclidean length on given regular grids, visiting all their vertices exactly once before coming back to the starting one.
\end{adjustwidth}
\vspace{6mm}

\section{Introduction} 
The famous knight's tour problem \cite{4} asks to perform, on a given chessboard, a sequence of moves of the \emph{knight}, the trickiest chess piece, that visits each square exactly once.

The earliest known reference to the knight's tour problem dates back to the 9th century AD. In Rudrata's Kavyalankara (see Reference \cite{5}) the pattern of a knight's tour has been presented as an elaborate poetic figure. The poet and philosopher Vedanta Desika in his poem Paduka Sahasram (14th century) has composed two Sanskrit verses where the second one can be derived from the first by performing a knight's tour in a $4 \times 8$ board. One of the first known mathematicians who investigated the knight's tour problem was Leonhard Euler \cite{11}. Later, in 1823, van Warnsdorff described the first procedure to complete a knight's tour. At the present time, many other scientific papers have been published around this puzzle and its variation \cite{6,7}. Moreover, if it is possible to reach the starting square with an additional knight move after the last one of a valid knight's tour, the resulting path is closed (Hamiltonian tour); otherwise, the knight's tour is open as each square has been visited exactly once but the beginning square is not reachable at the end.

On February 2024, the paper ``Proving the existence of Euclidean knight's tours on $n \times n \times \cdots \times n$ chessboards for $n < 4$" \cite{1} examined the knight's tour problem for some $k$-dimensional grids
\begin{equation}\label{kchessboard}
 C(n,k) \coloneqq \{0,1,\dots, n-1\}^k,   
\end{equation}
 where $k \in \mathbb{N}^+$ and 
\begin{equation}
\{0,1,\dots, n-1\}^k= \underbrace{\{\{0,1,\dots, n-1\} \times \{0,1,\dots, n-1\} \times \dots \times\{0,1,\dots, n-1\}\},}_{\text{$k$ times}}
\end{equation}
providing the necessary and sufficient condition for the existence of a closed knight's tour on any $C(2,k)$.

Trivially, $|C(n,k)| = n^k$ and so, given $i \in \{0,1,\dots,n^k\}$, the vertex $V_i \in C(n,k)$ is identified by the $k$-tuple of Cartesian coordinates $(x_1,x_2, \dots,x_k)$ : $x_1,x_2, \dots,x_k \in \{0,1,\dots, n-1\}$.
Thereby, on the given grid, each knight's move takes place by moving the piece from one vertex to another. Then, it is natural to associate a \emph{Euclidean} knight's tour to a proper sequence of all the elements of $\{V_1, V_2, \dots, V_{n^k}\}$ so that the Euclidean distance between any two consecutive vertices, $V_i$ and $V_{j:=i+1}$, remains the same by construction.

It is worth pointing out that the FIDE Handbook (see Reference \cite{2}) uses the superlative of \emph{near} as a criterion to state the official knight move rule. Consequently, it is common sense to assume also that, on the chessboard, any move that covers a distance of (exactly) $\sqrt{2^2+1^2}= \sqrt{5}$ units between the centers of the starting and ending square is a knight move. Thus, a knight's \emph{jump}, mathematically speaking, is the connection between two vertices, belonging to the grid $C(n,k)$, which are at a Euclidean distance of $\sqrt{5}$.

\sloppy Accordingly, let us define the distance between the two vertices $V_i=(x_1,x_2,\dots, x_k) $ and $V_j=(y_1,y_2,\dots, y_k)$ though this Euclidean distance $\| V_i-V_j \|: C(n,k) \rightarrow \mathbb{R} $,
\begin{equation}\label{norm}
\| V_i-V_j \| = \sqrt{(x_1-y_1)^2+(x_2-y_2)^2+\cdots+(x_k-y_k)^2}.
\end{equation} 
More specifically, for the \emph{Euclidean $k$-knight}, the distance $\| V_i-V_j \|$ is equal to $\sqrt{5}$.

At this point, it is useful to introduce the following definition.
\newtheorem{polygonal}{Definition}[section]
\begin{polygonal}\label{polygonal}
 Given a Euclidean tour on $C(n,k)$ and its associated distance $d: C(n,k) \rightarrow \mathbb{R}$, a given polygonal chain $P_C(n,k)$ indicates the ordered sequence of all vertices in $C(n,k)$ associated to a Euclidean tour such that the distance between the first vertex and the last one of the tour is $d$. Conversely, $P_O(n,k)$ indicates a valid Euclidean tour where the distance between the first and last vertex is not equal to $d$.
 \end{polygonal}

Now, \emph{A Guide to Fairy Chess} (see Reference \cite{3}) let us extend the Euclidean chess tour concept to the fascinating fairy chess pieces, listed in Table \ref{Tab1}.
\begin{table}[H]
    \centering
    \begin{tabular}{|c|c|c|c|c||c|c}
\hline
 \backslashbox{$\boldsymbol a$}{$\boldsymbol b $} & $\boldsymbol 0$ & $\boldsymbol 1$ & $\boldsymbol 2$ & $\boldsymbol 3$ & $\boldsymbol 4$ & \dots \\
\hline
$\boldsymbol 0$ &  Zero  (0) & Wazir  (W) & Dabbaba   (D) & Threeleaper  (H) & Fourleaper &  \\
\hline 
$\boldsymbol 1$ & Wazir (W) & Ferz (F) & Knight (N) & Camel (C) & Giraffe &  \\
\hline
$\boldsymbol 2$ & Dabbaba (D) & Knight (N) & Alfil (A) & Zebra (Z) & Stag &  \\
\hline 
$\boldsymbol 3$ & Threeleaper (H) & Camel (C) & Zebra (Z) & Tripper (G) & Antelope &  \\
\hline
\hline
$\boldsymbol 4$ & Fourleaper & Giraffe & Stag & Antelope & Commuter &  \\
\hline
\vdots &  &  &  &  &  &   \\

\end{tabular}
    \caption{Fairy chess' leapers.}
    \label{Tab1}
\end{table}

Those chess pieces are known as the \emph{leapers} since they jump from a square of the chessboard to another one at a given (fixed) distance. For instance, the Euclidean knight is described as $(1,2)$-leaper (or $(2,1)$-leaper); in fact, $\sqrt{2^2+1^2}=\sqrt{1^2+2^2}=\sqrt{5}$ as stated above. Furthermore, from the $(a,b)$ pair in Table \ref{Tab1}, it is clear that the canonical move of every fairy chess piece, in the $k$-dimensional grid $C(n,k)$, is obtained by adding or subtracting $a$ from one of the $k$ Cartesian coordinates of the starting vertex and, simultaneously, adding or subtracting $b$ to another of the remaining $k-1$ elements of the mentioned $k$-tuple. In this paper, we refer to any $(a,b)$ pair in Table \ref{Tab1} as an \emph{$(a,b)$-moving rule}.

\sloppy Thus, assuming $\sqrt{a^2+b^2}$ as our distance criterion, other noncanonical movements are allowed in $C(n,k)$, (i.e., for each $k>4$, the \emph{$(1,1,1,1,1)$-moving rule} is another possible move of the $(2,1)$-leaper called knight since $\sqrt{1^2+1^2+1^2+1^2+1^2}=\sqrt{5}$).  

This definition is justified by Article 3.6 of FIDE Handbook \cite{2} since: ``The knight may move to one of the squares \textbf{nearest} to that on which it stands but not on the same rank, file, or diagonal". 

Let the starting vertex $V_0 \equiv (0,0,0,0,0,0)$ of $C(2,6)$ be given. The $(1,1,1,1,1)$-moving rule is performed by adding or subtracting $1$ to five of the Cartesian coordinates of the starting vertex. Then, we only need to add the $1$ components of the $(1,1,1,1,1)$-moving rule to $(0,0,0,0,0,0)$ in order to reach another vertex of our $2 \times 2 \times 2 \times 2 \times 2 \times 2$ grid, say, $(1,0,1,1,1,1)$ or even $(0,1,1,1,1,1)$.

Notably, the uniqueness of Euclidean fairy chess pieces is intrinsically maintained by its canonical version. For instance, both the $(0,5)$-leaper and the $(3,4)$-leaper shares jumps of $\sqrt{25}=5$, and this means that in a $C(2,26)$ grid their moves are the same, but in $C(6,2)$ they jump on different vertices because they have the $(0, 5)$-moving rule and the $(3,4)$-moving rule, respectively. In detail, starting from $(0,0) \in C(6,2)$, the $(0,5)$-leaper alternatively moves to $(5,0)$ or $(0,5)$, while the $(3,4)$-leaper can only reach the vertex $(3,4)$ or the vertex $(4,3)$. 

Thus, we note that (usually) the fairy chess pieces have multiple options.
This is certainly the case of the $(2,3)$-leaper, the notable \emph{zebra}, for which the given jump length of $\sqrt{3^2+2^2}=\sqrt{13}$ make it possible to perform the $(2,1,1,1,1,1,1,1,1,1)$-moving rule, the $(2,2,1,1,1,1,1)$-moving rule, the $(1,1,1,1,1,1,1,1,1,1,1,1,1)$-moving rule, the $(2,2,2,1)$-moving rule, and the $(3,1,1,1,1)$-moving rule. 

To give an example with the $(2,2,2,1)$-moving rule in $C(2,5)$, starting from $(0,0,0,0,0)$ (as usual), the zebra can alternatively reach $(2,0,1,2,2,0)$ or $(0,2,2,2,0,1)$. Anyway, in the present paper, we only consider Hamiltonian Euclidean tours in $C(2,k)$ and consequently, for instance, only the $(1,1,1,1,1,1,1,1,1,1,1,1,1)$-moving rule is available for a $(2,3)$-leaper in order to move on a $C(2,k)$ grid, for any $k \geq 13$. Obviously, for each $k \leq 13$, it is not possible to cover all the vertices of the $C(2,k)$ grid with the special move that requires sums/subtractions of thirteen addends. 

Hence, we can refine Definition \ref{polygonal} as follows. 
\newtheorem{polygonalbis}[polygonal]{Definition}
\begin{polygonalbis}\label{polygonalbis}
Given a Euclidean tour on $C(n,k)$ and a generic fairy chess leaper $L$ with associated distance $d: C(n,k) \rightarrow \mathbb{R}$, the polygonal chain $P^L_C(n,k)$ indicates the ordered sequence of all vertices in $C(n,k)$ covered by $L$ such that distance between the last vertex and the first one is $d$. Conversely,$P^L_O(n,k)$ indicates a valid Euclidean tour of the leaper $L$ where the distance between the first and last vertex is not equal to $d$. 
\end{polygonalbis}

It is notable, that, in Definition \ref{polygonalbis}, the subscript $C$ is referred to a closed Euclidean tour, and $O$ is referred to an open one. Additionally, the closed path can be named \emph{Hamiltonian tour on $C(n,k)$} for its striking similarity to the \emph{ Hamiltonian cycle} and conversely, if the Euclidean tour is \emph{open}, the \emph{open Euclidean tour} denomination can be used. However, looking at Table \ref{Tab1}, we can replace the apex $L$ with any other leaper character (e.g., for the knight case, we have the polygonal chains $P^N_C(n,k)$ and $P^N_O(n,k)$).

Due to computing power limitations, the present paper only looks for wazir's, threeleaper's, and zebra's Euclidean Hamiltonian tours, and then we only need the $P^W_C(n,k)$, $P^T_C(n,k)$, and $P^Z_C(n,k)$ notations.  

\section{Parity of vertices} 
Subsection 4.1 of ``Metric spaces in chess and international chess pieces graph diameters" (see Reference \cite{8}) distinguishes between \emph{even} and \emph{odd} vertices of $C(n,k)$ as follows: given a vertex $V \equiv (x_1, x_2, \dots, x_k)$ of a $k$-dimensional grid $C(n,k)$ where $x_1, x_2, \dots, x_k \in \mathbb{N}$, assuming also $m \in \mathbb{N}$, it is possible to define $V$ \emph{even} if and only if
\begin{equation}
    \sum_{j=1}^k x_j = 2 m,
\end{equation} 
whereas we define $V$ to be \emph{odd} otherwise.
 
\newtheorem{vertices}{Lemma}[section]
\begin{vertices}\label{vertices}
Let $n, k, x_1, x_2, \dots, x_k \in \mathbb{N}$ and assume that $V \equiv (x_1, x_2, \dots, x_k)$ is an even vertex of the grid $C(n,k)$. Then, the number of odd coordinates of $V$ is even.   
\end{vertices}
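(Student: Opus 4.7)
The plan is to argue by a direct parity computation on the coordinate sum, partitioning the entries of $V$ according to their individual parity. The hypothesis gives $\sum_{j=1}^k x_j \equiv 0 \pmod{2}$, and the goal is purely a statement about how many of the $x_j$ are odd, so reducing everything modulo $2$ should be enough; no structural facts about $C(n,k)$ beyond the definition of \emph{even vertex} are needed.

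Concretely, I would split the index set $\{1,2,\dots,k\}$ into $I_{\text{odd}} = \{j : x_j \text{ is odd}\}$ and $I_{\text{even}} = \{j : x_j \text{ is even}\}$, and then write
\begin{equation*}
\sum_{j=1}^{k} x_j \;=\; \sum_{j \in I_{\text{odd}}} x_j \;+\; \sum_{j \in I_{\text{even}}} x_j.
\end{equation*}
The second sum is a sum of even integers, hence even. Combined with the assumption that the whole left-hand side equals $2m$, this forces $\sum_{j \in I_{\text{odd}}} x_j$ to be even as well. Reducing modulo $2$, each term in this remaining sum equals $1$, so $\sum_{j \in I_{\text{odd}}} x_j \equiv |I_{\text{odd}}| \pmod{2}$, and therefore $|I_{\text{odd}}|$ must itself be even, which is exactly the claim.

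There is really no substantive obstacle here; the statement is a one-line consequence of the definition once the coordinates are grouped by parity. The only thing to be slightly careful about is not to implicitly assume $n=2$ (which would trivialize matters further by making every $x_j \in \{0,1\}$): the argument as sketched works for arbitrary $n$, since it only uses that each $x_j$ is a nonnegative integer with a well-defined parity. If desired, one could phrase the whole proof in the language of the group homomorphism $\mathbb{Z}^k \to \mathbb{Z}/2\mathbb{Z}$ given by $(x_1,\dots,x_k) \mapsto \sum x_j \pmod 2$, but the elementary splitting above keeps the exposition self-contained and fits the tone of the paper.
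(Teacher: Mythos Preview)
Your proof is correct and follows essentially the same route as the paper: both split the coordinates into odd and even parts, observe that the even part contributes an even sum, deduce that the odd-coordinate sum is even, and conclude that the count of odd coordinates is even. Your version is slightly more explicit in the final step (reducing modulo $2$ to identify the odd-coordinate sum with $|I_{\text{odd}}|$), but the argument is the same.
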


\begin{proof}
Let $X:=\{x_1, x_2, \dots, x_k\}$ be the whole set of the $k$ coordinates of $V$, the given vertex of $C(n,k)$. If we indicate as $\{d_1, d_2, \dots, d_s\}$ the set of the odd elements of $X$, and as $\{p_1, p_2, \dots, p_t\}$ the set of the even coordinates of $X$, $s+t=k \in \mathbb{N}$ follows by construction.

Since $V$ is an even vertex by hypothesis, the following equality holds.
\begin{equation}
    \sum_{j=1}^k x_j = \sum_{j=1}^s d_j + \sum_{j=1}^t p_j = 2 m \quad (m \in \mathbb{N})
\end{equation}
and, secondly, $\sum_{j=1}^t p_j= 2 h$ for each $h \in \mathbb{N}$ with $h \leq m$.

Hence, 
\begin{equation}
    \sum_{j=1}^s d_j  = 2 m - 2 h = 2 (m-h).
\end{equation}
Therefore, $s$ is even and this concludes the proof.
\end{proof}
\sloppy Similarly, we can distinguish between even and odd \emph{leaper moves} by invoking the previously stated distance criterion for the fairy chess pieces. In fact, given a leaper and its $(x_1, x_2, \dots, x_{k'})$-moving rule in a $k$-dimensional grid $C(n,k)$, where $k' \leq k$ and $m \in \mathbb{N}$, we define the  $(x_1, x_2, \dots, x_{k'})$-moving rule as \emph{even} if and only if 
\begin{equation}
\sum_{j=1}^{k'} x_j = 2 m,
\end{equation}
otherwise we define the $(x_1, x_2, \dots, x_{k'})$-moving rule to be \emph{odd}.
 
\newtheorem{vertices2}[vertices]{Lemma}
\begin{vertices2}\label{vertices2}
Let $x_1, x_2, \dots, x_k, k', k, n \in \mathbb{N}$, $k' \leq k$, and assume that $\sqrt{x_1^2+x_2^2+ \cdots + x_{k'}^2}$ indicates the Euclidean distance between a pair of vertices of the given $C(n,k)$ grid. The $(x_1,x_2,\dots,x_{k'})$-moving rule is even if and only if $x_1^2+x_2^2+ \dots + x_{k'}^2$ is even, whereas the $(x_1,x_2,\dots,x_{k'})$-moving rule is odd if and only if $x_1^2+x_2^2+ \dots + x_{k'}^2$ is also odd.
\end{vertices2}
\begin{proof}
 If the radicand of $\sqrt{x_1^2+x_2^2+ \dots + x_{k'}^2}$ is even, it is notable that for any $m \in \mathbb{N}$
 \begin{eqnarray*}
    x_1^2+x_2^2+ \dots + x_{k'}^2 = 2 m \Rightarrow \\
     (x_1+x_2+ \dots+ x_{k'})(x_1+x_2+ \dots+ x_{k'})-2 \sum_{i=1}^{k'}\sum_{j=1}^{k'}x_i x_j = 2m \Rightarrow \\
     (x_1+x_2+ \dots + x_{k'})(x_1+x_2+ \dots + x_{k'})= 2m- 2\sum_{i=1}^{k'}\sum_{j=1}^{k'}x_i x_j.
 \end{eqnarray*}
 
In the above, we observe that the right-hand side is even while the left-hand side is the product of $(x_1+x_2+ \dots + x_{k'})$ by itself. Hence, if this product is even, the quantity $(x_1+x_2+ \dots + x_{k'})$ is also even. On the other hand, if we assume the aforementioned product to be odd, it follows that $(x_1+x_2+ \dots + x_{k'})$ should also be odd. Conversely, since the $(x_1+x_2+ \dots + x_{k'})$-moving rule is (alternatively) even or odd, the radicand $x_1^2+x_2^2+ \dots + x_{k'}^2$ can consistently be written by even or odd terms as
 \begin{eqnarray*}
      (x_1+x_2+ \dots+ x_{k'})(x_1+x_2+ \dots+ x_{k'})-2 \sum_{i=1}^{k'}\sum_{j=1}^{k'}x_i x_j ,
 \end{eqnarray*}
and this proves the present lemma. 
\end{proof}
\sloppy Accordingly, considering the leapers included in Table \ref{Tab1} and their possible $(x_1,x_2, \dots, x_{k'})$-moving rules for given $n \times n \times \cdots \times n$ grids $C(n,k)$, Theorem \ref{parita} follows.
\newtheorem{parita}{Theorem}[section]
\begin{parita}\label{parita}
    Let $n,k \in \mathbb{N}-\{0,1\}$ so that the $k$-dimensional grid $C(n,k)$ is given. Then, consider the $(a,b)$-leaper in $C(n,k)$ such that $a+b$ is even. If the $(a,b)$-leaper starts from an even starting vertex, it can only visit (some of) the $\lceil\frac{n^{k}}{2}\rceil-1$ even vertices, otherwise, if the $(a,b)$-leaper starts from an odd starting vertex, it can only visit (some of) the $\lfloor\frac{n^{k}}{2}\rfloor-1$ odd vertices.     
\end{parita}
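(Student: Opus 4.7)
The plan is to show that, under the hypothesis $a+b$ even, every admissible move of the leaper preserves the parity of the coordinate sum, from which the parity-class invariance follows immediately.

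First, I would characterize all moving rules available to the $(a,b)$-leaper on $C(n,k)$. By the Euclidean distance criterion, a $(x_1,x_2,\dots,x_{k'})$-moving rule is admissible if and only if $x_1^2+x_2^2+\cdots+x_{k'}^2=a^2+b^2$. Since $a+b$ is even by hypothesis, $a^2+b^2$ is even as well; hence every admissible moving rule satisfies $\sum_{j=1}^{k'} x_j^2 \equiv 0 \pmod 2$. Invoking Lemma \ref{vertices2}, this forces $\sum_{j=1}^{k'} x_j \equiv 0 \pmod 2$, so every admissible moving rule of the $(a,b)$-leaper is even.

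Next, I would observe that a single leaper move from a vertex $V=(v_1,\dots,v_k)$ to a vertex $V'=(v_1',\dots,v_k')$ modifies $k'$ of the coordinates by $\pm x_1,\pm x_2,\dots,\pm x_{k'}$ respectively, while leaving the remaining $k-k'$ coordinates untouched. Therefore
\begin{equation*}
\sum_{j=1}^k v_j' - \sum_{j=1}^k v_j = \pm x_1 \pm x_2 \pm \cdots \pm x_{k'},
\end{equation*}
whose parity coincides with that of $x_1+x_2+\cdots+x_{k'}$, which we just proved to be even. Consequently, each move preserves the parity of the coordinate sum, so Lemma \ref{vertices} ensures that the full trajectory of the leaper remains confined to the parity class of its starting vertex.

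It remains to count how many vertices belong to each parity class of $C(n,k)$. A standard generating-function argument (expanding $\prod_{j=1}^{k}(\sum_{x\in\{0,\dots,n-1\}} t^{x \bmod 2})$ and evaluating at $t=\pm 1$) yields $\lceil n^k/2 \rceil$ even vertices and $\lfloor n^k/2 \rfloor$ odd ones; this collapses to $n^k/2$ apiece when $n$ is even and to $(n^k\pm 1)/2$ when $n$ is odd. Excluding the starting vertex itself from the count, the leaper can visit at most $\lceil n^k/2 \rceil-1$ or $\lfloor n^k/2 \rfloor-1$ further vertices according to whether its start is even or odd, giving precisely the bound claimed.

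I do not foresee a serious obstacle here: the argument is essentially a parity-propagation proof layered on top of Lemma \ref{vertices2}. The only delicate point is making explicit that Lemma \ref{vertices2} applies uniformly to \emph{every} admissible moving rule of the $(a,b)$-leaper (not merely the canonical $(a,b)$-rule itself), which is exactly why the lemma was stated in the general $(x_1,\dots,x_{k'})$ form in the previous section.
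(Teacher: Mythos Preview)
Your argument is correct and reaches the same conclusion as the paper, but by a noticeably cleaner route. The paper, after invoking Lemma~\ref{vertices2} to deduce that every admissible moving rule has an even number $s'$ of odd components, carries out a three-way case analysis on how those $s'$ odd components are distributed among the odd and even coordinates of the current vertex, checking in each subcase that the number of odd coordinates remains even (hence the vertex stays even by Lemma~\ref{vertices}). You bypass this combinatorial bookkeeping entirely by tracking the coordinate sum modulo~$2$: since the move alters the sum by $\pm x_1\pm\cdots\pm x_{k'}\equiv x_1+\cdots+x_{k'}\equiv 0\pmod 2$, parity is preserved in one line. What the paper's approach buys is an explicit picture of how odd coordinates migrate under a move; what yours buys is brevity and the avoidance of any casework. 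One small remark: your citation of Lemma~\ref{vertices} at the end of the second paragraph is superfluous, since parity of the coordinate sum is exactly the definition of vertex parity and no further lemma is needed there.
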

\begin{proof}
There are $n^k$ vertices in $C(n,k)$. Consequently, the number of even and odd vertices is $\lceil\frac{n^{k}}{2}\rceil$ and $\lfloor\frac{n^{k}}{2}\rfloor$, respectively. Then, we only need to prove that each $(a,b)$-leaper such that $a+b$ is even can only visit even vertices if the piece is initially placed on an even vertex and vice versa. This implies that the maximum cardinality of each set of vertices belonging to any even/odd $(a,b)$-leaper tour which satisfies the above cannot exceed the number of even/odd vertices of $\{0,1,\ldots,n-1\}^k$.

Let us call $d_1, d_2, \ldots, d_s$ the odd coordinates of a given vertex of $C(n,k)$, and conversely let $p_1, p_2, \dots, p_t$ indicate the even coordinates of the same vertex ($s+t=k$ follows by construction). Without loss of generality, assume that the starting vertex, $V_0 \equiv (d_1, d_2, \ldots, d_s, p_1, p_2, \ldots, p_t)$, is even so that, by Lemma $\ref{vertices}$, $s$ (i.e., the number of the odd coordinates of $V_0$) is even.
    
By invoking Lemma $\ref{vertices2}$, it follows that if $a+b$ is even, then the jumping length characterizing our $(a,b)$-leaper is $\sqrt{a^2+b^2}$, which is also an even number. In general, every linear combination $\tilde{d_1}, \tilde{d_2}, \dots, \tilde{d_{s'}}, \tilde{p_1}, \tilde{p_2}, \dots, \tilde{p_{t'}}$ associated to the same distance is even since  
    \begin{equation*}
       a^2+b^2= \tilde{d_1}^2+\tilde{d_2}^2+ \dots+ \tilde{d_{s'}}^2+\tilde{p_1}^2+\tilde{p_2}^2+ \dots+ \tilde{p_{t'}}^2 \textnormal{,}
    \end{equation*}
     where $s',t' \in \mathbb{N}$ and $s'$ is even, while $\tilde{d_1}, \tilde{d_2}, \dots, \tilde{d_{s'}}$ are the odd coordinates and $\tilde{p_1}, \tilde{p_2}, \dots, \tilde{p_{t'}}$ are the even ones.

  We now observe how we can apply the $(\tilde{d_1},\tilde{d_2}, \dots, \tilde{d_{s'}},\tilde{p_1},\tilde{p_2}, \dots, \tilde{p_{t'}})$-moving rule to move a fairy chess piece from its starting spot. Since the even coordinates $\tilde{p_1},\tilde{p_2}, \dots, \tilde{p_{t'}}$ do not affect the parity of the starting vertex, given the fact that $d_1+p = d_2$ and $p_1+p= p_2$ hold for any odd $d_1,d_2 \in \mathbb{N}$ and for every even $p, p_1, p_2 \in \mathbb{N}$, we are free to consider only the $\tilde{d_1}, \tilde{d_2}, \dots, \tilde{d_{s'}}$ coordinates.

At this point, we have to distinguish between three cases, depending on how the odd $\tilde{d_1}, \tilde{d_2}, \dots, \tilde{d_{s'}}$ coordinates are applied to $V_0$.
 
\begin{outline}[enumerate]
\item \1 First of all, let us assume that $\tilde{d_1}, \tilde{d_2}, \dots, \tilde{d_{s'}}$ only change the values of $s'$ elements of the set $\{d_1,d_2, \ldots, d_s\}$ (the odd coordinates of $V_0$) and, in particular, let $s'$ be strictly smaller than $s$. It follows that $s'$ elements of the set $\{d_1,d_2, \dots, d_s\}$ become even. Since $s-s'$ is even, the sum of the remaining $s-s'$ odd coordinates is also even, and, after making the $(a,b)$-leaper move, we have that the reached vertex is even too. \newline
On the other hand, if $s'=s$, all the $d_1,d_2, \dots, d_s$ coordinates of the reached vertex become even and, consequently, the considered fairy chess piece lands on an even vertex.
\item Secondly, let us assume that $\tilde{d_1}, \tilde{d_2}, \dots, \tilde{d_{s'}}$ change only the values of $s'$ coordinates among $p_1,p_2,\dots, p_t$ (i.e., the even coordinates of $V_0$). Since $s'$ is even, $s'$ even elements of $\{p_1,p_2,\dots, p_t\}$ become odd so that their sum is even, and thus the reached vertex is even as well.
\item Lastly, we assume that the mentioned $\tilde{d_1}, \tilde{d_2}, \ldots, \tilde{d_{s'}}$ change the values of a subset of the $\{d_1, d_2, \dots, d_s, p_1, p_2, \dots, p_t\}$ coordinates of $V_0$.
For example, without loss of generality, we are allowed to assume that, for any pair of nonnegative integers $(s_1, s_2)$, $s'_1+s'_2=s'$ so that $s'_1$ odd coordinates of $\{d_1, d_2, \dots, d_s\}$ become even and $s'_2$ even coordinates of $\{p_1, p_2, \dots, p_t\}$ become odd. Since $s'$ is even, we distinguish two subcases: the case where both $s'_1$ and $s'_2$ are even, and the case where $s'_1$ and $s'_2$ are two odd integers.
\2 Let $s'_1 < s$ and $s'_1,s'_2$ be even. It follows that the sum of the remaining $s-s'_1$ odd coordinates of $\{d_1, d_2, \dots, d_s\}$ is even and the sum of $s'_2$ odd coordinates is also even (given the fact that $s'_2$ is even so that the selected fairy chess piece lands on an even vertex of $C(n,k)$). Alternatively, if $s'_1 = s$, it follows that all the $s$ odd coordinates $d_1, d_2, \dots, d_s$ become even, and then the reached vertex is even. 
\item Let $s'_1 < s$ and $s'_1,s'_2$ be odd. We have that the sum of the remaining $s-s'_1$ odd coordinates of $\{d_1, d_2, \dots, d_s\}$ is odd and the sum of $s'_2$ odd coordinates is also odd.
As a result, since the sum of two odd numbers is even, we have that the landing spot of the considered $(a,b)$-leaper is, again, an even vertex. 
\end{outline}
  
   A similar reasoning can be made as the starting vertex $V_0$ is odd, finally proving the theorem.  
\end{proof}
Applying Theorem $\ref{parita}$ to the pieces included in Table $\ref{Tab1}$, we conclude that Hamiltonian fairy chess tours are possible for wazir, threeleaper, knight, giraffe, zebra, antelope, and so forth.

In detail, we know that such knight's tours are always possible in $C(2,k)$ as $k$ becomes sufficiently large \cite{1}, while, considering the same family of grids, the currently available computing power has allowed us to research the wazir's tours, the threeleaper's tours, and even the zebra's ones. 

\section{Hamiltonian tours of fairy chess}

In 2007, Tom\'a\v{s} and Petr proved the existence of Hamiltonian paths in hypercubes \cite{9}. Here we show constructive proof for the wazir's tour.

\newtheorem{wazir}{Theorem}[section]
\begin{wazir}
A Hamiltonian Euclidean wazir's tour $P^W_C(2,k)$ exists for each positive integer $k$.    
\end{wazir}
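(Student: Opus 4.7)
The plan is to identify the wazir's move graph on $C(2,k)$ with the $k$-dimensional hypercube graph $Q_k$. Since the wazir is the $(0,1)$-leaper and the grid has side length $2$, the only available moving rule of jump length $1$ is the coordinate flip: two vertices of $\{0,1\}^k$ are joined by a wazir jump exactly when they differ in a single coordinate. Hence finding $P^W_C(2,k)$ is equivalent to exhibiting a cyclic Gray code on $k$ bits, or equivalently a Hamiltonian cycle on $Q_k$, for which a standard construction exists.

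First I would dispose of the tiniest cases by explicit construction. For $k=1$, the ordered sequence $(0),(1)$ already satisfies Definition \ref{polygonalbis}, because the first and last vertices are at wazir distance $1$. For $k=2$, the closed tour $(0,0),(1,0),(1,1),(0,1)$ works, since the last entry is at distance $1$ from the first. Observe that the parity restriction of Theorem \ref{parita} does not apply, because for the wazir $a+b = 0+1$ is odd, so every wazir jump toggles parity and both even and odd vertices can be visited.

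Then I would proceed by induction on $k$. Assume a tour $v_0, v_1, \ldots, v_{2^{k-1}-1}$ realizing $P^W_C(2,k-1)$, where consecutive entries differ in one coordinate and $v_{2^{k-1}-1}$ is at wazir distance $1$ from $v_0$. I would build $P^W_C(2,k)$ by the classical Gray code doubling trick, appending a new coordinate as follows:
\begin{equation*}
(v_0,0),(v_1,0),\ldots,(v_{2^{k-1}-1},0),(v_{2^{k-1}-1},1),(v_{2^{k-1}-2},1),\ldots,(v_0,1).
\end{equation*}
Every consecutive pair differs in exactly one coordinate, either inherited from the inductive sequence, or the appended coordinate which flips exactly once in the middle transition. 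All $2^k$ vertices are covered with no repetition, and the closing step from $(v_0,1)$ back to $(v_0,0)$ has length $1$, so the tour is closed in the sense of Definition \ref{polygonalbis}.

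No substantial obstacle is expected, since the wazir adjacency coincides with the hypercube edges and Hamiltonian cycles on $Q_k$ are a textbook construction. The only subtlety worth flagging is the $k=1$ case, where the ``cycle'' re-uses the unique edge of $Q_1$; this is nonetheless admissible under Definition \ref{polygonalbis}, which only requires an ordered vertex sequence covering $C(2,k)$ exactly once and closing at the leaper's jump distance.
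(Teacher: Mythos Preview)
Your proposal is correct and follows essentially the same approach as the paper: both build the tour by appending a new coordinate, duplicating the $(k-1)$-dimensional tour into the two copies with last coordinate $0$ and $1$, reversing the second copy, and gluing them across the single-coordinate flip---i.e., the reflected Gray code construction. Your write-up is slightly more explicit about the inductive hypothesis and the identification with the hypercube graph $Q_k$, but the underlying argument is the same.
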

\begin{proof}
Trivially, $P^W_C(2,1) \coloneqq (0) \rightarrow (1)$ describes a wazir's tour for $C(2,1)$, and we note that this tour is also Hamiltonian (since the Euclidean distance between the vertices $(0)$ and $(1)$ is $ \sqrt{0^2+1^2}=\sqrt{1}=1$). Then, it is possible to move $P^W_C(2,1)$ from $C(2,1)$ to $C(2,2)$ adding a new coordinate at the right-hand side in order to construct $P^W_{C_1}(2,2) \coloneqq(0,0) \rightarrow (1,0)$ and $P^W_{C_2}(2,2) \coloneqq (0,1) \rightarrow (1,1)$.

Hence, by reverting the tour $P^W_{C_2}(2,2)$, we get $\hat{P^W_{C_2}}(2,2) \coloneqq (0,1) \leftarrow (1,1)$ and so, connecting the ending vertex of $P^W_{C_1}(2,2)$ with the starting vertex of $\hat{P^W_{C_2}}(2,2)$, the new wazir's tour $P^W_C(2,2) \coloneqq (0,0) \rightarrow (1,0) \rightarrow (1,1) \rightarrow (0,1)$ is finally constructed. Using the same procedure, we consequently get the wazir's tour $P^W_C(2,3) \coloneqq(0,0,0) \rightarrow (1,0,0) \rightarrow (1,1,0) \rightarrow (0,1,0) \rightarrow (0,1,1) \rightarrow (1,1,1) \rightarrow (1,0,1) \rightarrow (0,0,1)$, and then we can repeat the same process, for each $k >3$.   
\end{proof}
As we explained in the introduction, the currently available computing power has allowed us to research a closed Hamiltonian threeleaper's tour in $C(2,10)$ and a closed Hamiltonian zebra's one in $C(2,14)$, but only Hamiltonian threeleaper's tours for the $C(2,11)$ grid and Hamiltonian zebra's tours for the $C(2,15)$ grid have been found. Due to their length, we have decided to upload on Zenodo the solutions $P^T_C(2,11)$ and $P^Z_C(2,15)$ (choosing the binary representation of the vertices with the aim to enlight the patterns arising from the representation of the given polygonal chains).

For instance, the binary representation of the vertex $(0,0,0,1,0,0,1,0,0,1,1,0,0,1,0) \in C(2,15)$ is $000000000110010$, a number obtained by listing the mentioned coordinates from left to right. 

Hence, about the threeleaper, we have the following result.
\newtheorem{threeleaper}[wazir]{Theorem}
\begin{threeleaper}
A Hamiltonian Euclidean threeleaper's tour $P^T_C(2,k)$ exists for each integer $k~\geq~11$.    
\end{threeleaper}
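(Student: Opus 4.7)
The plan is to proceed by induction on $k$, with the base case $k = 11$ supplied by the explicit closed Hamiltonian threeleaper's tour $P^T_C(2,11)$ uploaded to Zenodo. The inductive step is to construct $P^T_C(2, k+1)$ from $P^T_C(2, k)$.

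First, I would note that on any $C(2, k)$ with $k \geq 9$ the only admissible threeleaper moving rule is the $(1,1,1,1,1,1,1,1,1)$-rule, so every threeleaper move flips exactly nine of the binary coordinates and consecutive vertices along a threeleaper's tour have Hamming distance $9$. Consequently the wazir's ``double-and-reverse'' argument of Theorem 3.1 does not transfer verbatim, because the edge joining $(v, 0)$ and $(v, 1)$ is a wazir move but not a threeleaper move.

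My approach is to view $C(2, k+1)$ as the disjoint union of two copies $C_0$ and $C_1$ of $C(2, k)$ distinguished by the value of the last coordinate. A threeleaper move inside either copy is inherited from $C(2, k)$; a move crossing between $C_0$ and $C_1$ flips the last coordinate and exactly eight of the first $k$ coordinates, i.e.\ it joins two vertices whose first $k$ coordinates have Hamming distance $8$. Given a closed tour $T = v_0 \to v_1 \to \cdots \to v_{2^k - 1} \to v_0$ in $C(2, k)$, I would lift it to closed tours $T_0 \subset C_0$ and $T_1 \subset C_1$. To merge $T_0$ and $T_1$ into a single closed Hamiltonian tour of $C(2, k+1)$, locate indices $i \neq j$ such that the Hamming distances between $v_i$ and $v_{j+1}$ and between $v_j$ and $v_{i+1}$ in $\{0,1\}^k$ both equal $8$; delete the edges $(v_i, 0) \to (v_{i+1}, 0)$ and $(v_j, 1) \to (v_{j+1}, 1)$ and insert the two crossings $(v_i, 0) \to (v_{j+1}, 1)$ and $(v_j, 1) \to (v_{i+1}, 0)$. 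By construction these are legal threeleaper moves, and the resulting graph is a single closed cycle visiting each vertex of $C(2, k+1)$ exactly once.

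The main obstacle is therefore to guarantee the existence of such a pair $(i, j)$ for the inherited tour, and to propagate this guarantee through the induction. I would handle it by strengthening the induction hypothesis: prove not merely that a closed tour exists but that one exists containing two edges which satisfy the Hamming-distance-$8$ crossing condition. The strengthened property can be checked directly at $k = 11$ from the Zenodo data, and it should be preserved under the splice, since all but two edges of $T_0$ and $T_1$ survive into the new tour and the number of vertices at Hamming distance $8$ from any fixed vertex of $\{0,1\}^{k+1}$ is $\binom{k+1}{8}$, which grows rapidly with $k$ and permits a pigeonhole-style argument to locate a suitable pair at every subsequent inductive step.
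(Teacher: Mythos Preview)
Your overall strategy matches the paper's: induction on $k$ with the Zenodo tour as base case, and a doubling construction that places one copy of the tour in each $k$-face of the $(k+1)$-cube. The gap is in how you join the two copies. You propose to search for a pair of edges $(v_i,v_{i+1})$, $(v_j,v_{j+1})$ in the tour with $d(v_i,v_{j+1})=d(v_j,v_{i+1})=8$, and you defer the existence of such a pair to a ``pigeonhole-style argument'' based on the size of $\binom{k+1}{8}$. That argument is never carried out: knowing that each vertex has many Hamming-$8$ neighbours does not by itself produce two \emph{tour edges} in the required configuration, and your claim that the strengthened hypothesis ``should be preserved under the splice'' is an assertion, not a proof. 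As written, the inductive step is incomplete.

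The paper avoids this difficulty entirely with one clean idea you are missing. Let $\phi:\{0,1\}^k\to\{0,1\}^k$ flip the first eight coordinates. Then $\phi$ is an automorphism of the threeleaper graph (it preserves Hamming distance), and for \emph{every} vertex $v$ one has $d(v,\phi(v))=8$. So if the given closed tour runs $A\to\cdots\to B$ in $C(2,k)$, place it unchanged in the face $x_{k+1}=0$, place its $\phi$-image reversed in the face $x_{k+1}=1$, and concatenate:
\[
(A,0)\to\cdots\to(B,0)\to(\phi(B),1)\to\cdots\to(\phi(A),1).
\]
The two new joins $(B,0)\to(\phi(B),1)$ and $(\phi(A),1)\to(A,0)$ each change exactly $8+1=9$ coordinates, so they are legal threeleaper moves, and the result is a closed Hamiltonian tour on $C(2,k+1)$. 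No search, no strengthened hypothesis, no pigeonhole: the automorphism manufactures the crossing edges for free at the endpoints of the two paths.
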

\begin{proof}
Firstly, only the $(1,1,1,1,1,1,1,1,1)$-moving rule can be applied to the context of a Euclidean threeleaper in $C(2,k)$, and thus the condition $k \geq 9$ is mandatory in order to perform any threeleaper jump inside the given grid.
    
However, as $k=9$, we observe that the threeleaper cannot visit all the vertices of $C(2,9)$ (e.g., if the starting vertex is $V_0 \equiv (0,0,0,0,0,0,0,0,0)$, then the only reachable vertex is $V_1 \equiv (0+1, 0+1, 0+1, 0+1, 0+1, 0+1, 0+1, 0+1, 0+1)= (1,1,1,1,1,1,1,1,1)$ and now, using again the $(1,1,1,1,1,1,1,1,1)$-moving rule, it is only possible to subtract every $1$ to the coordinates of $V_1$, coming back to $V_0$).

On the other hand, for $k=11$, a Hamiltonian tour is provided by the polygonal chain 
    \begin{eqnarray*}
    P^T_C(2,11) \coloneqq (0,0,0,0,0,0,0,0,0,0,0) \rightarrow \\ (0,0,1,1,1,1,1,1,1,1,1) \rightarrow \cdots \rightarrow (0,1,1,1,1,1,1,0,1,1,1)
    \end{eqnarray*}
    \sloppy described in the data file \url{https://zenodo.org/records/11199717} (DOI: \url{10.5281/zenodo.11199717}); there, the Euclidean distance between the final and the starting vertex is 
    \begin{equation*}
        \| (0,1,1,1,1,1,1,0,1,1,1)-(0,0,0,0,0,0,0,0,0,0,0)\| = \sqrt{9},
    \end{equation*}
and this proves the existence of a Hamiltonian threeleaper's tour in $C(2,11)$.
Then, to extend this solution to $C(2,12)$, it is sufficient to note that $C(2,11)$ is simply the set of the $2^{11}$ corners of a $11$-cube. Thus, since any vertex of an $11$-face belonging to a $12$-cube is connected to some other vertices belonging to the opposite $11$-face of the same $12$-cube by as many minor diagonals, it is possible to take the solution for the $k=11$ case and reproduce it on the opposite $11$-face of the mentioned $12$-cube. Now, it is possible to mirror/rotate the $11$-face in order to connect the endpoints of both the covering paths of the two $11$-faces through as many diagonals of (Euclidean) length $\sqrt{9}$.    
   In detail, we can extend the $k=11$ solution 
    \begin{eqnarray*}
        P^T_C(2,11) = (0,0,0,0,0,0,0,0,0,0,0) \rightarrow \\ (0,0,1,1,1,1,1,1,1,1,1,1,1) \rightarrow  \cdots \rightarrow (0,1,1,1,1,1,1,0,1,1,1)
    \end{eqnarray*}
        to $k=12$ as follows.
    \begin{enumerate}
        \item In order to move $P^T_C(2,11)$ from $C(2,11)$ to $C(2,12)$, we need to duplicate it as 
        \begin{eqnarray*}
        P^T_{C_1}(2,12) \coloneqq(0,0,0,0,0,0,0,0,0,0,0,0) \rightarrow \\(0,0,1,1,1,1,1,1,1,1,1,0) \rightarrow \cdots \rightarrow (0,1,1,1,1,1,1,0,1,1,1,0)
         \end{eqnarray*}
        and 
        \begin{eqnarray*}
        P^T_{C_2}(2,12) \coloneqq(0,0,0,0,0,0,0,0,0,0,0,1) \rightarrow \\(0,0,1,1,1,1,1,1,1,1,1,1) \rightarrow \cdots \rightarrow (0,1,1,1,1,1,1,0,1,1,1,1),
        \end{eqnarray*}
        adding a new coordinate at the right-hand side.
        \item Now we have to mirror/rotate the $11$-face joined by the polygonal chain $P^T_{C_2}(2,12)$; to achieve this goal, it is sufficient starting from the left-hand side, switching the first $9-1$ coordinates of $P^T_{C_2}(2,12)$, and finally obtaining the new polygonal chain 
        \begin{eqnarray*}
            \tilde{P^T_{C_2}}(2,12) \coloneqq(1,1,1,1,1,1,1,1,0,0,0,1) \rightarrow \\ (1,1,0,0,0,0,0,0,1,1,1,1) \rightarrow \cdots \rightarrow (1,0,0,0,0,0,0,1,1,1,1,1).
        \end{eqnarray*}
        \item Naturally, $\tilde{P^T_{C_2}}(2,12)$ is a Hamiltonian path because the Euclidean distance between the last and the first vertex is $\sqrt{9}$, as the distance between any two consecutive vertices of the given polygonal chain.
        \item Finally, we can connect the $11$-face of the $12$-cube to the opposite $11$-face by considering the reverse path of $\tilde{P^T_{C_2}}(2,12)$, which is defined by
        \begin{eqnarray*}
            \hat{P^T_{C_2}}(2,16) \coloneqq(1,1,1,1,1,1,1,1,0,0,0,1) \leftarrow \\ (1,1,0,0,0,0,0,0,1,1,1,1) \leftarrow \cdots \leftarrow (1,0,0,0,0,0,0,1,1,1,1,1).
        \end{eqnarray*}
        This is correct since the polygonal chain 
        \begin{eqnarray*}
            P^T_C(2,12) \coloneqq(0,0,0,0,0,0,0,0,0,0,0,0) \rightarrow \\(0,0,1,1,1,1,1,1,1,1,1,0) \rightarrow \cdots \rightarrow (0,1,1,1,1,1,1,0,1,1,1,0) \rightarrow \\(1,0,0,0,0,0,0,1,1,1,1,1) \rightarrow   \cdots  \rightarrow (1,1,0,0,0,0,0,0,1,1,1,1) \rightarrow \\(1,1,1,1,1,1,1,1,0,0,0,1)
        \end{eqnarray*} is obtained by connecting the ending point of $P^T_{C_1}(2,12)$ to the starting point of $\hat{P^T_{C_2}}(2,12)$. 
        \end{enumerate}
Consequently, $P^T_C(2,12)$ is a threeleaper Hamiltonian tour since the Euclidean distance between the starting vertex $(0,0,0,0,0,0,0,0,0,0,0,0)$ and the ending vertex $(1,1,1,1,1,1,1,1,0,0,0,1)$ is $\sqrt{9}$ while the polygonal chains $P^T_{C_1}(2,12)$ and $\hat{P^T_{C_2}}(2,12)$ are Hamiltonian by construction.

Then, the described process can be iterated to extend the $12$-cube solution to the $13$-cube, and so forth.

\sloppy Therefore, for each $C(2,k)$ grid such that $k \geq 11$, we have shown the existence of a Hamiltonian threeleaper's tour, and this concludes the proof.   
\end{proof}

With regard to the zebra, we can prove a similar result.
\newtheorem{zebra}[wazir]{Theorem}
\begin{zebra}
A Hamiltonian Euclidean zebra's tour $P^Z_C(2,k)$ exists for each integer $k \geq 15$. 
\end{zebra}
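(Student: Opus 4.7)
The strategy is to follow the same constructive template as the preceding threeleaper theorem, with every occurrence of the step $9$ replaced by the zebra step $13$, and with the base case supplied by the explicit closed tour $P^Z_C(2,15)$ archived in the Zenodo dataset already referenced in the paper. The starting observation is that in $C(2,k)$ every coordinate difference lies in $\{-1, 0, 1\}$, so the zebra's jump length $\sqrt{13}$ forces exactly thirteen coordinates to differ between consecutive vertices; equivalently, the only moving rule available to a zebra on $C(2,k)$ is the $(1,1,\dots,1)$-rule with thirteen ones, and a Hamiltonian zebra tour on $C(2,k)$ is nothing other than a Hamiltonian cycle in the graph on $\{0,1\}^k$ whose edges join the pairs of vertices at Hamming distance $13$. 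The base case $k = 15$ is then handled by citing the explicit sequence $P^Z_C(2,15)$ posted on Zenodo, whose validity amounts to a mechanical check that each pair of successive vertices (together with the closing pair) has Hamming distance exactly $13$.

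For the inductive step, assume $k \geq 15$ and let $P^Z_C(2,k) = v_0 \to v_1 \to \cdots \to v_{2^k-1}$ be a closed Hamiltonian zebra tour. Inside $C(2,k+1)$ I would form two copies: $P^Z_{C_1}(2,k+1)$ by appending the new coordinate $0$ to every $v_i$, and $P^Z_{C_2}(2,k+1)$ by appending $1$. Next, I would mirror the second copy by toggling the first twelve coordinates of each of its vertices (which is possible since $k \geq 15 > 12$); because the same twelve positions are flipped uniformly across the entire path, all internal Hamming distances are preserved, so the mirrored chain $\widetilde{P^Z_{C_2}}(2,k+1)$ remains a valid zebra path. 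Reversing it yields $\widehat{P^Z_{C_2}}(2,k+1)$, and concatenating produces
\[ P^Z_C(2,k+1) \coloneqq (v_0,0) \to \cdots \to (v_{2^k-1},0) \to (\widetilde{v_{2^k-1}},1) \to \cdots \to (\widetilde{v_0},1). \]

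The two new edges to verify are the connecting jump $(v_{2^k-1},0) \to (\widetilde{v_{2^k-1}},1)$ and the closing jump $(\widetilde{v_0},1) \to (v_0,0)$: each flips the twelve mirrored coordinates plus the appended coordinate, producing $12 + 1 = 13$ differences, i.e., Euclidean length $\sqrt{13}$ as required. Iterating the extension from $k$ to $k+1$ starting at $k = 15$ then yields a closed Hamiltonian zebra tour on $C(2,k)$ for every $k \geq 15$.

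The only real obstacle is the base case itself: certifying $P^Z_C(2,15)$ is a Hamiltonian cycle search on a graph with $2^{15} = 32768$ vertices and must be delegated to computation. Once that tour is in hand, the mirror-duplicate-reverse induction above is essentially identical to the threeleaper argument (with twelve replacing eight in the number of mirrored coordinates) and carries the result to every $k \geq 15$.
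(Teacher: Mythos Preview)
Your proposal is correct and follows essentially the same approach as the paper: the explicit Zenodo tour supplies the base case $k=15$, and the inductive extension is carried out by appending a new coordinate, toggling the first twelve coordinates of the second copy, reversing it, and splicing the two halves together. Your write-up is in fact slightly more explicit than the paper's in verifying that the two new joining edges have Hamming distance exactly $12+1=13$, but the underlying construction is identical.
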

\begin{proof}
Firstly, only the $(1,1,1,1,1,1,1,1,1,1,1,1,1)$-moving rule can be applied to the context of a Euclidean zebra in $C(2,k)$, and thus the condition $k \geq 13$ is mandatory in order to perform any zebra jumps inside the given grid.
    
    But then again (as for the case $k=9$ with reference to the threeleaper tour), as $k=13$ is given, we should note that the zebra cannot visit all the vertices of $C(2,13)$ (e.g., if the starting vertex is $V_0 \equiv (0,0,0,0,0,0,0,0,0,0,0,0,0)$, then the only reachable vertex is $V_1 \equiv (0+1, 0+1, 0+1, 0+1, 0+1, 0+1, 0+1, 0+1, 0+1, 0+1, 0+1, 0+1, 0+1)= (1,1,1,1,1,1,1,1,1,1,1,1,1)$ and now, applying the $(1,1,1,1,1,1,1,1,1,1,1,1,1)$-moving rule once more, it is only possible to subtract every $1$ to the coordinates of $V_1$, coming back to $V_0$).
    
    On the other hand, for the $k=15$ case, a Hamiltonian tour is provided by the polygonal chain
    \begin{eqnarray*}
    P^Z_C(2,15) \coloneqq(0,0,0,0,0,0,0,0,0,0,0,0,0,0,0) \rightarrow \\
    (0,0,1,1,1,1,1,1,1,1,1,1,1,1,1) \rightarrow \dots \rightarrow (0,1,1,1,1,1,1,1,1,1,1,1,1,1,0)
    \end{eqnarray*}
    \sloppy described in the data file \url{https://zenodo.org/records/11490687} (DOI: \url{10.5281/zenodo.11490687}); there, the Euclidean distance between the final and the starting vertex is 
    \begin{equation*}
        \| (0,1,1,1,1,1,1,1,1,1,1,1,1,1,0)-(0,0,0,0,0,0,0,0,0,0,0,0,0,0,0)\| = \sqrt{13},
    \end{equation*}
     and this proves the existence of a Hamiltonian zebra's tour in $C(2,15)$. Then, to extend this solution to $C(2,16)$, it is sufficient to observe that $C(2,15)$ is the set of the $2^{15}$ corners of a $15$-cube. Thus, since any vertex of a $15$-face belonging to a $16$-cube is connected to some other vertices belonging to the opposite $15$-face of the same $16$-cube by as many minor diagonals, it is possible to take the solution for the $k=15$ case and reproduce it on the opposite $15$-face of the mentioned $16$-cube. Now, it is possible to mirror/rotate the $15$-face in order to connect the endpoints of both the covering paths of the two $15$-faces through as many diagonals of (Euclidean) length $\sqrt{13}$.    
    In detail, we can extend the $k=15$ solution 
    \begin{eqnarray*}
        P^Z_C(2,15) = (0,0,0,0,0,0,0,0,0,0,0,0,0,0,0) \rightarrow \\ (0,0,1,1,1,1,1,1,1,1,1,1,1,1,1) \rightarrow  \cdots \rightarrow (0,1,1,1,1,1,1,1,1,1,1,1,1,1,0)
    \end{eqnarray*}
        to $k=16$ as follows.
    \begin{enumerate}
        \item In order to move $P^Z_C(2,15)$ from $C(2,15)$ to $C(2,16)$, we duplicate it as 
        \begin{eqnarray*}
        P^Z_{C_1}(2,16) \coloneqq(0,0,0,0,0,0,0,0,0,0,0,0,0,0,0,0) \rightarrow \\(0,0,1,1,1,1,1,1,1,1,1,1,1,1,1,0) \rightarrow \cdots \rightarrow (0,1,1,1,1,1,1,1,1,1,1,1,1,1,0,0)
         \end{eqnarray*}
        and
        \begin{eqnarray*}
        P^Z_{C_2}(2,16) \coloneqq(0,0,0,0,0,0,0,0,0,0,0,0,0,0,0,1) \rightarrow \\(0,0,1,1,1,1,1,1,1,1,1,1,1,1,1,1) \rightarrow \cdots \rightarrow (0,1,1,1,1,1,1,1,1,1,1,1,1,1,0,1)
        \end{eqnarray*}
        (by adding a new coordinate at the right-hand side, as usual).
        \item Now we have to mirror/rotate the $15$-face joined by the polygonal chain $P^Z_{C_2}(2,16)$; for this purpose, it is sufficient starting from the left-hand side, switching the first $13-1$ coordinates of $P^Z_{C_2}(2,16)$, and finally getting the new polygonal chain 
        \begin{eqnarray*}
            \tilde{P^Z_{C_2}}(2,16) \coloneqq(1,1,1,1,1,1,1,1,1,1,1,1,1,0,0,1) \rightarrow \\(1,1,0,0,0,0,0,0,0,0,0,0,1,1,1,1) \rightarrow \cdots \rightarrow (1,0,0,0,0,0,0,0,0,0,0,0,1,1,0,1).
        \end{eqnarray*}
        \item Naturally, $\tilde{P^Z_{C_2}}(2,16)$ is a Hamiltonian path because the Euclidean distance between the last and the first vertex is $\sqrt{13}$, as the distance between any two consecutive vertices of the given polygonal chain.
        \item Finally, we can connect the $15$-face of the $16$-cube to the opposite $15$-face by considering the reverse path of $\tilde{P^Z_{C_2}}(2,16)$, which is defined by
        \begin{eqnarray*}
            \hat{P^Z_{C_2}}(2,16) \coloneqq(1,1,1,1,1,1,1,1,1,1,1,1,0,0,0,1) \leftarrow \\(1,1,0,0,0,0,0,0,0,0,0,0,1,1,1,1) \leftarrow \cdots \leftarrow (1,0,0,0,0,0,0,0,0,0,0,0,1,1,0,1).
        \end{eqnarray*}
        This is correct since the polygonal chain 
        \begin{eqnarray*}
            P^Z_C(2,16) \coloneqq(0,0,0,0,0,0,0,0,0,0,0,0,0,0,0,0) \rightarrow \\(0,0,1,1,1,1,1,1,1,1,1,1,1,1,1,0) \rightarrow  \cdots  \rightarrow (0,1,1,1,1,1,1,1,1,1,1,1,1,1,0,0) \rightarrow \\(1,0,0,0,0,0,0,0,0,0,0,0,1,1,0,1) \rightarrow   \cdots  \rightarrow (1,1,0,0,0,0,0,0,0,0,0,0,1,1,1,1) \rightarrow \\(1,1,1,1,1,1,1,1,1,1,1,1,0,0,0,1)
        \end{eqnarray*} is obtained by connecting the ending point of $P^Z_{C_1}(2,16)$ to the starting point of $\hat{P^Z_{C_2}}(2,16)$. 
        \end{enumerate}
        Consequently, $P^Z_C(2,16)$ is a zebra Hamiltonian tour since the Euclidean distance between the starting vertex $(0,0,0,0,0,0,0,0,0,0,0,0,0,0,0,0)$ and the ending vertex $(1,1,1,1,1,1,1,1,1,1,1,1,0,0,0,1)$ is $\sqrt{13}$ while the polygonal chains $P^Z_{C_1}(2,16)$ and $\hat{P^Z_{C_2}}(2,16)$ are Hamiltonian by construction.
        
        Then, the described process can be iterated to extend the $16$-cube solution to the $17$-cube, and so forth. 
        
        Therefore, for each $C(2,k)$ grid such that $k \geq 15$, we have shown the existence of a Hamiltonian zebra's tour, and this proves the present theorem.
    \end{proof}
\sloppy The algorithm used here to extend $P^T_C(2,11)$ and $P^Z_C(2,15)$ to the $12$-cube and $16$-cube (respectively) is the same that has been described in the paper \emph{``Proving the existence of Euclidean knight's tours on $n \times n \times \cdots \times n$ chessboards for $n < 4$"} \cite{1}, so we lastly point out that the above method can also be recycled for others fairy chess pieces as we aim to extend their known Hamiltonian tours in $C(2,k)$ to higher dimensions. 

\section{Conclusion}
With regards to $C(2,k)$, every entry of the sub-matrix underlined in Table $\ref{Tab1}$ has been investigated since Theorem $\ref{parita}$ excludes all fairy chess leapers but wazir, threeleaper, knight, and zebra (given the fact that Reference \cite{1} constructively proves the existence of Hamiltonian Euclidean knight's tours on infinitely many grids $C(2,k)$ while the present paper achieves the same result for the other three mentioned leapers).

Actually, we have only proven the existence of Hamiltonian Euclidean threeleaper's and zebra's tours in $C(2,k)$ under the assumptions that $k \geq 11$ and $k \geq 15$, respectively. Thus, the problem 
of proving or disproving the existence of Hamiltonian Euclidean tours is entirely open for the threeleaper in $C(2,10)$ and the zebra in $C(2,14)$.

Although the current calculating power does not allow us to extend our analysis to different fairy chess leapers, it would be interesting to examine the existence of Hamiltonian Euclidean tours in $C(3,k)$ for sufficiently large integers $k$.

\section*{Acknowledgments}
We are very grateful to Aldo Roberto Pessolano for the algorithm in Python code used to discover $P^T_C(2,11)$ and $P^Z_C(2,15)$. Indeed, we acknowledge Tony Di Febo, sincerely thanking him for sharing the personal computer on which we performed the calculations necessary to find the listed Hamiltonian Euclidean tours.
    
\makeatletter
\renewcommand{\@biblabel}[1]{[#1]\hfill}
\makeatother

\section*{Appendix}

The following script is the \emph{Python code} used to study the threeleaper and zebra Hamiltonian closed tours. The following is a brute force algorithm and the code has been running on a QuadCore Intel Core $i7-2600$, $3700$ Mhz while the operative system has been Microsoft Windows 8.1 Professional.

The polygonal chain $P^T_C(2,11)$ was found in about three seconds while we spent about thirty seconds to find the polygonal chain $P^Z_C(2,15)$.  

\begin{verbatim}
    def search(T, k, n, casi):
    history = []
    fullHistory = []
    backtrack = False
    steps = [sum(2**i for i in subset) for subset
    in subsets(range(0, k), n)]
    crash = 0
    quit = 0
    solutions = []

    history.append(T)
    while len(history) < 2**k + 2 and crash < 10**12:
        crash += 1
        if crash % 100000 == 0:
            print(f"First {crash} cases verified.
            Verifying: {history}")

        if len(history) == 2**k + 1 and history[-1] == T:
            quit += 1
            if quit <= casi:
                solution = '\n'.join([bin(num)[2:].zfill(k)
                for num in history])
                solutions.append(solution)
                print(f"Found Hamilton cycle {quit}:\n{solution}")
                if quit == casi:
                    with open("hamilton_cycles.txt", "w") as file:
                        file.write("Hamilton Cycles:\n\n")
                        file.write('\n\n'.join(solutions))
                    return
            else:
                history.pop()
                if backtrack:
                    history.pop()
                    backtrack = False
        else:
            if history[-1] == T and len(history) != 1:
                history.pop()
            if backtrack:
                history.pop()
                backtrack = False

        for i in range(len(steps)):
            if i == len(steps) - 1:
                backtrack = True
            step = steps[i]
            nextT = history[-1] ^ step
            if nextT not in history or nextT == T:
                history.append(nextT)
                if history not in fullHistory:
                    fullHistory.append(history.copy())
                else:
                    history.pop()
                    continue
                break

def subsets(iterable, r):
    pool = tuple(iterable)
    n = len(pool)
    if r > n:
        return
    indices = list(range(r))
    yield tuple(pool[i] for i in indices)
    while True:
        for i in reversed(range(r)):
            if indices[i] != i + n - r:
                break
        else:
            return
        indices[i] += 1
        for j in range(i+1, r):
            indices[j] = indices[j-1] + 1
        yield tuple(pool[i] for i in indices)

import time

def main():
    k = int(input("Number of dimensions (int): "))
    n = int(input("Hamming distance (int): "))
    casi = int(input("Number of solutions to find (int): "))
    T = 0

    start_time = time.time()
    search(T, k, n, casi)
    end_time = time.time()

    execution_time = end_time - start_time
    print(f"\nExecution time: {execution_time:.5f} seconds")

if __name__ == "__main__":
    main()
\end{verbatim}
\end{document}